\newtheorem{thm}{Theorem}[section]
\newtheorem{lem}[thm]{Lemma}
\newtheorem{prop}[thm]{Proposition}
\theoremstyle{definition}
\theoremstyle{remark}
\newcommand{\bt}{\beta}
\newcommand{\ep}{\varepsilon}
\newcommand{\sm}{\sigma}
\newcommand{\om}{\omega}
\newcommand{\Hess}{\mathrm{Hess}}
\newcommand{\vol}{\mathrm{vol}}
\newcommand{\R}{{\mathbb{R}}}
\newcommand{\C}{{\mathbb{C}}}
\newcommand{\nab}{\nabla}
\newcommand{\ds}{\displaystyle}
\newcommand{\be}{\begin{equation}}
\newcommand{\ee}{\end{equation}}
\begin{document}

%\numberwithin{equation}{section}

\title[Canonical identification at infinity for Ricci-flat manifolds]
{Canonical identification at infinity for Ricci-flat manifolds}

\author[Jiewon Park]{Jiewon Park}

\maketitle

\begin{abstract}
We give a natural way to identify between two scales, potentially arbitrarily far apart, in a non-compact Ricci-flat manifold with Euclidean volume growth when a tangent cone at infinity has smooth cross section. The identification map is given as the gradient flow of a solution to an elliptic equation.
\end{abstract}

\vskip 1pc

\section{Introduction}

\vskip 1pc

Let $(M,g)$ be a complete, non-compact Riemannian manifold of dimension $n \geq 3$ with nonnegative Ricci curvature, with a fixed point $p \in M$. By Gromov's compactness theorem \cite{Gro} any sequence of rescalings $(M,r_i^{-2}g, p)$ with $r_i \to \infty$ has a subsequence that converges in the pointed Gromov-Hausdorff topology to a length space.   Any such limit is said to be a {\em tangent cone} at infinity of $M$. When $M$ has Euclidean volume growth, i.e., for all $r>0$ and some $v>0$,
\be
\vol (B_r(p))\geq v r^n,
\ee
then it is a result of Cheeger-Colding \cite{CC, CC2} that any tangent cone is a metric cone: it is isometric to a warped product $C(X)=[0,\infty) \times_r X$, where the cross section $X$ is a compact metric space. 

In general tangent cones may depend on the choice of rescalings $\{r_i\}$ and might not be unique; one might see different cones at different scales \cite{Pe, CC2, CN2, H1}. However, Colding-Minicozzi \cite{CM3} showed the following uniqueness theorem.

\begin{thm}[\cite{CM3}, Theorem 0.2] \label{CM_unique}
Let $M^n$ be a complete non-compact Ricci-flat manifold of Euclidean volume growth. If one tangent cone at infinity has smooth cross section, then the tangent cone is unique.
\end{thm}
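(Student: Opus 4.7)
The plan is to show that the pointed rescalings $(M, r^{-2}g, p)$ converge to a unique limit as $r \to \infty$ by establishing that the ``shape'' of $M$ at scale $r$ is of bounded logarithmic variation. The smoothness of the cross section $X$ of one tangent cone is precisely what makes an infinite-dimensional \L ojasiewicz--Simon inequality available: it places the cone at a smooth point in a moduli space of solutions, enabling analytic techniques near it. Throughout, I will work on the logarithmic scale parameter $s = \log r$ so that rescalings become a flow.

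First, I would introduce a geometric functional $F(s)$ attached to the manifold at each scale whose critical set consists of (germs of) metric cones and which is monotone along $s$. A natural candidate is not the normalized volume $r^{-n}\vol(B_r(p))$ alone (which is monotone by Bishop--Gromov but too weak), but a Dirichlet- or entropy-type functional built from a canonical solution $b$ of an elliptic equation, e.g.\ $\Delta b^2 = 2n$ outside a compact set, so that on a Ricci-flat cone one has $\Hess(b^2/2) = g$. Differentiating $F$ in $s$ and using the Bochner identity together with $\Ric \equiv 0$, the derivative should be controlled from below by an $L^2$-norm of a tensorial ``cone obstruction'' $T$, such as $T = \Hess(b^2/2) - g$, which vanishes identically if and only if $M$ is a metric cone at that scale.

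Second, and this is the heart of the argument, I would prove a \L ojasiewicz--Simon gradient inequality in a neighborhood of the limit cone $C(X)$,
\be
|F - F_\infty|^{1-\te} \leq C\,\|T\|_{L^2},
\ee
for some $\te \in (0, 1/2]$. To set this up, one parametrizes metrics near $C(X)$ by deformations of its smooth cross section $X$ together with a radial parameter, turning $\nab F$ into a smooth map between Banach manifolds. Its linearization at $C(X)$ is an elliptic Jacobi-type operator on $X$; since $X$ is smooth and compact, this operator is Fredholm with closed range on standard H\"older/Sobolev spaces, so Simon's infinite-dimensional extension of the classical \L ojasiewicz inequality applies.

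Combining the monotonicity from the first step with the \L ojasiewicz inequality produces a differential inequality $-F'(s) \ge c\,(F(s)-F_\infty)^{2(1-\te)}$, which yields the summability $\int_{s_0}^\infty \|T(s)\|_{L^2}\, ds < \infty$. Interpreting $\|T\|_{L^2}$ as an infinitesimal displacement of the rescaled manifold in the Gromov--Hausdorff moduli space, this bounds the total logarithmic length of $s \mapsto (M, e^{-2s}g, p)$, forcing convergence to a unique limit, which must be the given cone $C(X)$. The main obstacle is the second step: one must choose $F$ and the Banach spaces so that the linearization of $\nab F$ at the cone is genuinely Fredholm (in particular coping with the non-compact radial direction of $C(X)$), and so that $F$ is analytic/smooth in a neighborhood of the cone in the chosen chart. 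Smoothness of $X$ is used critically both for the Fredholm property and for having a smooth deformation space in which to linearize.
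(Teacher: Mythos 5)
Your sketch reproduces the broad strategy of Colding--Minicozzi's proof (the present paper simply cites \cite{CM3} rather than reproving the theorem): a scale-monotone quantity built from the Green-function potential $b = G^{1/(2-n)}$, whose derivative controls a weighted $L^2$-norm of a tensor that vanishes exactly on cones; an infinite-dimensional \L ojasiewicz--Simon inequality whose Fredholm hypothesis is supplied by the compact smooth cross section; and a summability argument giving a convergence rate and hence a unique limit. Two small corrections to the analytic setup: the potential satisfies $\Delta b^2 = 2n|\nabla b|^2$ (not $\Delta b^2 = 2n$; $|\nabla b|$ is only asymptotically constant), so the correct ``cone obstruction'' is the \emph{trace-free} Hessian $\Hess_{b^2} - \frac{\Delta b^2}{n}\,g$ rather than $\Hess(b^2/2) - g$; and in \cite{CM3} the \L ojasiewicz inequality is reduced to the closed cross section $X$ (where ellipticity on a compact manifold makes the Jacobi operator Fredholm), so the ``non-compact radial direction'' you flag as an obstacle is disposed of precisely by that reduction rather than by working on the whole cone.
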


Ricci-flat manifolds with Euclidean volume growth are important objects in numerous areas of mathematics and physics, including K\"ahler and Sasaki geometry, general relativity, and string theory; see for instance \cite{TY1, CH1, CH2, CW1, Sz1, Sz2, MSY2, MSY3, BG, VC1, K1} among others.

\vskip 1pc

In this paper we give a strengthening of Theorem \ref{CM_unique} by showing that there is an essentially canonical way of identifying any two scales even when they are very different. The identification itself is given as the gradient flow of a solution to an elliptic equation and thus, in particular, is a diffeomorphism.

\vskip 1pc

To define the identification map, recall that $M$ is {\it nonparabolic} by the work of Varopoulos \cite{V}, i.e., $M$ possesses a minimal positive symmetric Green function $G$ for the Laplacian. We will use the normalization $\Delta G(x,\cdot) = -n(n-2) \om_n \delta_x$ where $\om_n$ is the volume of the unit $n$-ball, so that $G = r^{2-n}$ when $(M,g,p) = (\R^n, g_{\mathrm{Euc}},0)$. We will denote the single-variable function $G(p, \cdot)$ simply as $G$. Define the function $b$ (cf. \cite{C, CM4, CM3}) by 
\be
b = G^{\frac{1}{2-n}},
\ee
and we denote the gradient flow of $b^2$ by $\Phi:M \times \R \to M$.

\vskip 1pc

As a motivation recall how $\Phi$ identifies different scales in the special case of the Euclidean space $(M,g,p) = (\R^n, g_{\mathrm{Euc}},0)$. In this case $b=r$ and $\Phi$ is simply a dilation map $\Phi_t(x) = e^{2t} x$. Thus $\Phi_t$ identifies two scales by the rescaling $\Phi_t^*g_{\mathrm{Euc}} = e^{4t}g_{\mathrm{Euc}}$. To state matters more naturally without the factor of $t$, we perform a coordinate change to the metric $g_{\mathrm{Euc}} = dr^2 + r^2 g_{\mathbb{S}^{n-1}}$ by $s = \log r$, so that $r^{-2} g_{\mathrm{Euc}} = ds^2 + g_{\mathbb{S}^{n-1}}$ is now a cylindrical metric. Since $r(\Phi_t(x)) = e^{2t}x$, it follows that 
\begin{equation}
(r \circ \Phi_t)^{-2} \Phi_t^* g_{\mathrm{Euc}} = g_{\mathrm{Euc}}, \nonumber
\end{equation}
so the metric $(r\circ \Phi_t)^{-2} \Phi_t^* g_{\mathrm{Euc}}$ is constant in $t$.

\vskip 1pc

Our theorem generalizes this example to Ricci-flat manifolds.  It identifies two scales on average after performing a conformal change to bring the metric in cylindrical form, and gives the rate of how fast they become similar. We will use the symbol $\fint$ to denote average integrals. So for instance, the notation $\ds{\fint_{b=r} f d\sm}$ refers to the average integral $\ds{\frac{1}{\mathcal{H}^{n-1}(\{b=r\})} \int_{b=r} f d\sm}$ over a level set $\{b=r\}$ where $d\sm$ is the area measure.

\vskip 1pc

\begin{thm} \label{mainthm2}
Let $(M^n,g)$ be a complete Ricci-flat manifold of dimension $n \geq 3$ and Euclidean volume growth. Define the family of metrics $g(t)$ by
\be
g(t) = (b\circ \Phi_t)^{-2} \Phi_t^* g.
\ee
Suppose that a tangent cone at infinity of $M$ has smooth cross section. Then there exist constants $C,r_0, \bt>0$ so that for any $r>r_0$ and $T>t>0$,
\begin{align} \label{mainthmest}
\fint_{b=r}\left\{ \sup_{v \neq 0} \left|\log \frac{g(T)(v,v)}{g(t)(v,v)} \right|\right\}  d\sm \leq C t^{-\frac{\bt}{2}}.
\end{align}
\end{thm}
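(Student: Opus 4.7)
The plan is to differentiate $g(\tau)$ in $\tau$, observe that $\pr_\tau g(\tau)$ is controlled pointwise by a tensor that vanishes on metric cones, and then invoke the polynomial decay estimates underlying the uniqueness result of \cite{CM3}.

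\emph{Step 1: differentiating $g(\tau)$.} Since $\Phi_\tau$ is the flow of $\nab b^2$, we have $\pr_\tau(\Phi_\tau^* g) = 2\Phi_\tau^*\Hess(b^2)$ and $\pr_\tau(b\circ\Phi_\tau) = 2(b|\nab b|^2)\circ\Phi_\tau$. A direct computation then yields
\be
\pr_\tau g(\tau) = 2\,(b\circ\Phi_\tau)^{-2}\,\Phi_\tau^* A, \qquad A := \Hess(b^2) - 2|\nab b|^2\,g.
\ee
On any metric cone with $b$ equal to the distance to the vertex, $\Hess(b^2) = 2g$ and $|\nab b|^2 \equiv 1$, so $A \equiv 0$. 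Thus $|A|$ measures precisely the failure of $(M,g)$ to be conical.

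\emph{Step 2: reduction to the operator norm of $A$.} Writing $\log(g(T)(v,v)/g(t)(v,v)) = \int_t^T \pr_\tau g(\tau)(v,v)/g(\tau)(v,v)\,d\tau$ and taking $\sup_v$ inside the integral,
\begin{align*}
\sup_{v\ne 0}\left|\log \frac{g(T)(v,v)}{g(t)(v,v)}\right|\!(p) \le \int_t^T \|g(\tau)^{-1}\pr_\tau g(\tau)\|_{\mathrm{op}}(p)\,d\tau.
\end{align*}
The conformal factor $(b\circ\Phi_\tau)^{-2}$ cancels in the operator-norm ratio and pullback by $\Phi_\tau$ is an isometry on symmetric $(0,2)$-tensors, so
\be
\|g(\tau)^{-1}\pr_\tau g(\tau)\|_{\mathrm{op}}(p) = 2\,\|g^{-1}A\|_{\mathrm{op}}(\Phi_\tau(p)).
\ee

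\emph{Step 3: change of variables.} Averaging over $\{b=r\}$ and swapping the order of integration,
\begin{align*}
\fint_{b=r}\sup_{v\ne 0}\left|\log\frac{g(T)(v,v)}{g(t)(v,v)}\right|d\sm \le 2\int_t^T \fint_{b=r}\|g^{-1}A\|_{\mathrm{op}}(\Phi_\tau(p))\,d\sm(p)\,d\tau.
\end{align*}
Since $\tfrac{d}{d\tau}(b\circ\Phi_\tau) = 2(b|\nab b|^2)\circ\Phi_\tau$ with $|\nab b|^2 \to 1$ at infinity \cite{C}, one has $b(\Phi_\tau(p)) \approx r\,e^{2\tau}$, and because $\Delta b^2 = 2n|\nab b|^2$ (from $\Delta b^{2-n} = 0$) the Jacobians of $\Phi_\tau$ along and transverse to level sets of $b$ are uniformly controlled. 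A coarea-type change of variables $s = b\circ\Phi_\tau$ converts the right-hand side into an annular integral of $\fint_{b=s}\|g^{-1}A\|_{\mathrm{op}}\,d\sm$ against $ds/s$ over $s \in [re^{2t},\,re^{2T}]$.

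\emph{Step 4: polynomial decay via \cite{CM3}.} This is where the smooth cross section hypothesis enters. The uniqueness proof in \cite{CM3} rests on a Colding-type monotonicity formula \cite{C} for a functional $V(s)$ whose $s$-derivative controls $\fint_{b=s}|A|^2\,d\sm$, combined with a {\L}ojasiewicz-Simon inequality for $V$ that holds because the smooth cross section gives a spectral gap for the Jacobi-type operator governing perturbations of the cone. Together these yield a constant $\bt>0$ for which
\begin{align*}
\int_r^\infty \fint_{b=s}\|g^{-1}A\|_{\mathrm{op}}^2\,d\sm\,\frac{ds}{s} \le C\,(\log r)^{-\bt}.
\end{align*}

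\emph{Step 5: Cauchy-Schwarz and summation.} Cauchy-Schwarz on each dyadic window $[R,2R]$ gives
\begin{align*}
\int_R^{2R}\fint_{b=s}\|g^{-1}A\|_{\mathrm{op}}\,d\sm\,\frac{ds}{s} \le C\,(\log R)^{-\bt/2},
\end{align*}
and summing these bounds over the dyadic windows $[2^k r e^{2t},\,2^{k+1} re^{2t}]$ for $k\ge 0$ produces an estimate of the form $C(\log r + 2t)^{-\bt/2}$ (after a harmless shrinking of $\bt$ to absorb the geometric series). Since $r > r_0 \ge 1$ forces $\log r + 2t \ge 2t$, we conclude the bound $C\,t^{-\bt/2}$, which is \eqref{mainthmest}.

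The principal obstacle is Step 4: establishing the {\L}ojasiewicz-Simon inequality with a definite exponent under the smooth cross section assumption is the technical heart of \cite{CM3}, and connecting the resulting decay of $V$ to integrated decay of $|A|^2$ requires identifying $V'(s)$ with the correct annular $L^2$ quantity. Step 3 also carries a technical burden since area elements on different level sets of $b$ must be compared after moving points by the flow of $\nab b^2$, but the necessary inputs (Hessian and gradient estimates on $b$) are available from \cite{C, CM4}.
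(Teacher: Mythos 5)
Your Steps 1--3 parallel the paper's Lemmas 2.2--2.4 and Proposition 2.5, and Step 4 correctly identifies the {\L}ojasiewicz-type decay (Theorem 3.3 of the paper, Proposition 2.25 of \cite{CM3}) as the essential input. But there is a genuine gap in Step 5, and it is exactly the point where the paper's iteration scheme is not interchangeable with yours. After the coarea change of variables, the quantity to be estimated lives on the interval $[\log r + 2t,\ \log r + 2T]$ in the variable $u = \log b$, and Theorem 3.3 controls the $L^2$ quantity over $\{u \geq u_0\}$ by $C u_0^{-(1+\bt)}$. Your dyadic windows $[2^k r e^{2t},\, 2^{k+1} r e^{2t}]$ in $b$-space are \emph{arithmetic} windows of fixed length $\log 2$ in $u$-space, so the per-window Cauchy--Schwarz bound is $C\,(k\log 2 + \log r + 2t)^{-(1+\bt)/2}$, and the sum over $k$ is a polynomial tail $\sum_k (k+a)^{-(1+\bt)/2}$, not a geometric series. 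This diverges unless $\bt > 1$, which Theorem 3.3 does not guarantee (it only gives $\bt > 0$), and even when it converges the resulting exponent is $(\bt - 1)/2$, not $\bt/2$; the phrase ``harmless shrinking of $\bt$ to absorb the geometric series'' papers over a convergence failure, since shrinking $\bt$ only worsens the tail.

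The fix is the crux of the paper's iteration: the windows must grow geometrically in $u$-space so that the Cauchy--Schwarz cost $\sqrt{\text{window length}}$ scales with the decay. Concretely, the paper takes windows $[t, At],\,[At,A^2t],\ldots$ in the flow-time variable (equivalently windows of length $\sim 2(A-1)A^k t$ in $u$), so each contributes $C\sqrt{(A-1)A^k t}\cdot (A^k t)^{-(1+\bt)/2} = C\sqrt{A-1}\,(A^k t)^{-\bt/2}$, and summing gives the honest geometric series $\sum_k A^{-k\bt/2} = (1 - A^{-\bt/2})^{-1}$, which converges for every $\bt > 0$ and yields exactly the claimed $t^{-\bt/2}$. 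This is why the paper applies Cauchy--Schwarz in $\tau$ \emph{first} (Lemma 2.4), keeping the $\sqrt{t-s}$ factor explicit, and only then iterates with $t = As$. A smaller point: you write $|\nab b|^2 \to 1$ at infinity, but the correct asymptotic value is $b_\infty^2 \leq 1$, with equality only for $\R^n$; this does not affect the structure of the argument but does enter the constants.
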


\vskip 1pc

In equation (\ref{mainthmest}), the bound $Ct^{-\frac{\bt}{2}}$ is independent of $T$ and decreases with $t$. Thus the scale $\Phi_t(\{b=r\})$ is identified with the scale $\Phi_T(\{b=r\})$ for any $T>t$ and the estimate becomes better if $t$ is large. Also note that (\ref{mainthmest}) holds in particular for $r = \exp(At) r_0$ for a constant $A>0$, which is roughly the scale at time $t$.

\vskip 1pc

 The key ingredient in the proof of Theorem \ref{mainthm2} is a monotone non-decreasing quantity that approaches zero at a desirably fast rate. It is well-known that monotonicity formulae for elliptic and parabolic operators have a large number of geometric applications \cite{C, CM5, AM1, FMP1, AFM1} (see also \cite{CM6} for a survey). The rapid decay of this monotone quantity follows from an infinite dimensional \L ojasiewicz inequality \cite{CM3}. It is a critical component in proving the uniqueness of the tangent cone in Theorem \ref{CM_unique}.
 
Many examples are known of complete manifolds with nonnegative Ricci curvature and Euclidean volume growth that have non-isometric tangent cones \cite{Pe, CC2}, or even non-homeomorphic tangent cones \cite{CN2}. Even in the Ricci-flat case, Hattori \cite{H1} constructed a 4-dimensional manifold (with less than Euclidean volume growth) that has infinitely many non-isometric tangent cones at infinity. Other uniqueness results include an earlier result of Cheeger-Tian \cite{CT1}, where uniqueness was shown under the additional assumption of the integrability of cross sections and quadratic decay of sectional curvature. On the complex geometric side, Donaldson-Sun \cite{DS2} proved uniqueness of tangent cone at singularities of a limit of K\"ahler-Einstein manifolds, which heavily used the complex algebraic structure of the limit space. In fact, sometimes the algebraic geometry of the singularity can even be so powerful to determine the tangent cone \cite{HS1}.
 
Finally, we mention that Colding-Naber \cite{CN1} showed that even in the non-unique case, the tangent cones at a point vary in a H\"older continuous manner as one moves the point along a limit geodesic in the limit space. They also proved that the set of points that have unique tangent cone is convex.

\vskip 1pc

The rest of the paper is devoted to proving Theorem \ref{mainthm2}. In Section \ref{section2}, we will first relate the change in $t$ of the metric $g(t)$ to a geometric quantity on $M$. The quantity will be given as a weighted $L^2$-norm of the trace-free Hessian of $b^2$, i.e., $\ds{\left(\Hess_{b^2} - \frac{\Delta b^2}{n} g\right)}$. This quantity is useful since it decays fast enough to imply that the change in $t$ of $g(t)$ is small, as shown in Section \ref{section3}. In fact, $g(t)$ and the integral estimate in Theorem \ref{mainthm2} are designed precisely to bring this quantity into play.

\vskip 1pc

\section{Bounding $g(t)$ by the geometry of $M$} \label{section2}

\vskip 1pc

We remark that throughout this paper, the Laplacian is the trace of the Hessian with respect to $g$, and the Green function $G$ satisfies 
\begin{equation}
\Delta_x \int_M G(x,y) f(y) \, dy = -f(x), \hspace{1cm} \forall f\in\mathcal{C}_0^\infty(M).
\end{equation}
Since $G$ is harmonic away from the pole $p$, $b^2=G^{2/(2-n)}$ satisfies the equation
\begin{equation}
\Delta b^2 = 2n|\nab b|^2.
\end{equation}

We first compute the time derivative of $g(t)$. 

\vskip 1pc

\begin{lem} \label{g'(t)}
$g'(t)$ is given by
\begin{equation}
	\frac{d}{dt} g(t)= 2\Phi_t^*\left[b^{-2} \left(\Hess_{b^2} -\frac{\Delta b^2}{n} g\right)\right].
\end{equation}
Here the Hessian and the Laplacian are taken with respect to the original metric $g$.
\end{lem}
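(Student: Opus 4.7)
The plan is to differentiate the two factors in $g(t) = (b\circ \Phi_t)^{-2} \Phi_t^* g$ separately via the product rule and then recombine using the identity $\Delta b^2 = 2n|\nab b|^2$ already recorded in the section.

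First I would compute $\frac{d}{dt}\Phi_t^* g$. Since $\Phi_t$ is the flow of $\nab b^2$, the standard identity for the pullback of a metric along a flow gives
\begin{equation}
\frac{d}{dt}\Phi_t^* g = \Phi_t^* \mathcal{L}_{\nab b^2} g = 2\,\Phi_t^* \Hess_{b^2}, \nonumber
\end{equation}
where the second equality uses the basic formula $\mathcal{L}_{\nab f} g = 2\,\Hess_f$ valid for any smooth function $f$. This handles the pullback factor.

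Next I would compute the scalar factor $h(t) := b\circ \Phi_t$. By the chain rule and the definition of the flow,
\begin{equation}
\frac{d}{dt} h = \langle \nab b, \nab b^2\rangle \circ \Phi_t = 2\,(b\,|\nab b|^2)\circ \Phi_t, \nonumber
\end{equation}
using $\nab b^2 = 2b\,\nab b$. Hence
\begin{equation}
\frac{d}{dt} h^{-2} = -2h^{-3}\cdot \frac{dh}{dt} = -4\, h^{-2}\, (|\nab b|^2 \circ \Phi_t). \nonumber
\end{equation}

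Combining the two derivatives via the product rule yields
\begin{equation}
\frac{d}{dt} g(t) = h^{-2}\,\Phi_t^*\!\left[ 2\,\Hess_{b^2} - 4\,|\nab b|^2\, g \right]. \nonumber
\end{equation}
Substituting $|\nab b|^2 = \Delta b^2/(2n)$ (the equation for $b^2$ recorded at the start of the section) collapses the bracket to $2\bigl(\Hess_{b^2} - (\Delta b^2/n)\,g\bigr)$, and pulling the conformal factor $h^{-2} = (b\circ\Phi_t)^{-2}$ inside the pullback as $\Phi_t^*(b^{-2}\cdot)$ gives exactly the asserted formula. The computation is elementary; the only point of care is bookkeeping the factor of $2$ coming from $\nab b^2 = 2b\nab b$ and the factor of $2n$ from the equation for $b^2$, so I do not anticipate any substantive obstacle beyond keeping these constants straight.
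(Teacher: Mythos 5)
Your proposal is correct and follows essentially the same approach as the paper: product rule on the conformal factor and the pullback, the identity $\mathcal{L}_{\nab b^2} g = 2\,\Hess_{b^2}$, and the substitution $2|\nab b|^2 = \Delta b^2/n$. The only cosmetic difference is that you differentiate $h = b\circ\Phi_t$ by the chain rule while the paper writes $(\mathcal{L}_{\nab b^2} b^{-2})\circ\Phi_t$; these are the same computation.
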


\vskip 1pc

\begin{proof} Fix any point $x \in M \backslash \{p\}$, and a tangent vector $v \in T_x M$. Then we have the following calculation, where $\mathcal{L}$ is the usual Lie derivative.

\begin{align*}
& \hspace{-0.8cm} \left(\frac{d}{dt}(b \circ \Phi_t)^{-2} \Phi_t^*g\right)_x (v,v) \\
&= (\mathcal{L}_{\nab b^2} b^{-2})(\Phi_t(x))\cdot (\Phi_t^* g)_{x} ( v, v)\\
&\hspace{1cm}+ b\left(\Phi_t(x)\right)^{-2} \cdot (\mathcal{L}_{\nab b^2} g)_{\Phi_t(x)} ((d\Phi_t)_{x} v,(d\Phi_t)_{x}v)\\
&=g(\nab b^2, \nab b^{-2})(\Phi_t(x)) \cdot (\Phi_t^*g)_{x}(v,v)\\
&\hspace{1cm}+ b\left(\Phi_t(x)\right)^{-2} \cdot 2(\Hess_{b^2})_{\Phi_t(x)} ((d\Phi_t)_{x} v,(d\Phi_t)_{x} v)\\
&=-4b(\Phi_t(x))^{-2}|\nab b|^2(\Phi_t(x)) \cdot (\Phi_t^*g)_{x}(v,v)\\
&\hspace{1cm}+ b\left(\Phi_t(x)\right)^{-2} \cdot 2(\Hess_{b^2})_{\Phi_t(x)} ((d\Phi_t)_{x} v,(d\Phi_t)_{x} v) \\
&=2b(\Phi_t(x))^{-2}\left(\Phi_t^*\left(\Hess_{b^2}-2|\nab b|^2 g\right)\right)_{x}(v,v) \\
&=2b(\Phi_t(x))^{-2}\left(\Phi_t^*\left(\Hess_{b^2}-\frac{\Delta b^2}{n} g\right)\right)_{x}(v,v).
\end{align*}
This proves the lemma.
\end{proof}

\vskip 1pc

\begin{lem} \label{g'(t)norm}
The norm of the time derivative of $g(t)$ is controlled by the Hessian of $b^2$:
\begin{equation}
	\left\|\frac{d}{dt} \, g(t)\right\|^2_{g(t)}(x) \leq 4\left\|\Hess_{b^2}^g - \frac{\Delta b^2}{n} g\right\|^2_g(\Phi_t(x)).
\end{equation}	
\end{lem}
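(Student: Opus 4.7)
The strategy is to reduce the norm computation to a direct application of the conformal scaling of a $(0,2)$-tensor's norm, combined with the isometry built into the definition of $g(t)$. Starting from Lemma \ref{g'(t)}, we have
\[
\frac{d}{dt}g(t) = 2\,\Phi_t^*\!\left[b^{-2}\left(\Hess_{b^2} - \frac{\Delta b^2}{n}\,g\right)\right],
\]
so the task reduces to evaluating the $g(t)$-norm at $x$ of the $\Phi_t$-pullback of a conformally rescaled tensor, and showing that the $b^{-2}$ prefactor and the conformal change of metric compensate each other.

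First I would observe that $g(t) = \Phi_t^{*}(b^{-2}g)$ by definition, so $\Phi_t : (M, g(t)) \to (M, b^{-2}g)$ is a global isometry; in particular pointwise norms of $(0,2)$-tensors are preserved under pullback, yielding $\|\Phi_t^{*} h\|_{g(t)}(x) = \|h\|_{b^{-2}g}(\Phi_t(x))$ for every symmetric $(0,2)$-tensor $h$ on $M$. The next ingredient is the standard conformal scaling identity: for $\tilde g = \lambda^2 g$, two applications of the inverse-metric rescaling $\tilde g^{ij} = \lambda^{-2} g^{ij}$ give $\|h\|_{\tilde g}^2 = \lambda^{-4} \|h\|_g^2$. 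Applying this with $\lambda = b^{-1}$ and $h = b^{-2} T$, where $T = \Hess_{b^2} - \tfrac{\Delta b^2}{n} g$, the prefactor $b^{-2}$ squared contributes $b^{-4}$ while the conformal factor contributes $b^{4}$; the two cancel, and one obtains $\|b^{-2} T\|_{b^{-2}g}^2 = \|T\|_g^2$.

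Combining these two observations with the factor of $2$ from Lemma \ref{g'(t)} yields
\[
\left\|\tfrac{d}{dt}\,g(t)\right\|_{g(t)}^2(x) \;=\; 4\left\|\Hess_{b^2} - \frac{\Delta b^2}{n}\,g\right\|_g^2\!\bigl(\Phi_t(x)\bigr),
\]
which is in fact an equality, so the inequality in the lemma is a weakening that suffices for later use. There is no real obstacle: the content of the lemma is simply the bookkeeping of how the flow $\Phi_t$ and the conformal factor $b^{-2}$ interact, and its purpose is to package the right-hand side in terms of a quantity intrinsic to $(M,g)$ so that the decay estimates on the traceless Hessian of $b^2$ can be brought to bear in Section \ref{section3}.
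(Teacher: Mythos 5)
Your proof is correct and takes a genuinely different, more structural route than the paper's. The paper works pointwise with test vectors: it substitutes the unit vector $d(\Phi_t)_x v / |d(\Phi_t)_x v|_g$ into the formula from Lemma \ref{g'(t)}, obtains $\frac{g'(t)_x(v,v)}{g(t)_x(v,v)} = 2(\Hess_{b^2} - \frac{\Delta b^2}{n}g)_{\Phi_t(x)}(w,w)$ with $|w|_g = 1$, and then bounds by the supremum over unit vectors. Your argument instead factors the computation through two invariance principles: (i) $g(t) = \Phi_t^*(b^{-2}g)$, so $\Phi_t\colon (M,g(t)) \to (M,b^{-2}g)$ is an isometry, hence pullback preserves tensor norms pointwise; and (ii) the conformal scaling $\|h\|^2_{\lambda^2 g} = \lambda^{-4}\|h\|^2_g$, which with $\lambda = b^{-1}$ exactly cancels the prefactor $b^{-2}$ appearing in $\frac{d}{dt}g(t)$. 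Both are correct; yours is cleaner because it exposes why the powers of $b$ must cancel, whereas the paper's cancellation of $b^{-2}$ happens silently in the second line of its chain of equalities.

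One caveat on your final remark that the inequality is ``in fact an equality.'' This holds under a consistent norm convention on both sides (either both operator norms, or both Hilbert--Schmidt norms), since the conformal scaling factor $\lambda^{-4}$ is the same for either. But the paper's own proof controls only the operator norm of $g'(t)$ with respect to $g(t)$ (that is, $\sup_{|w|=1} |T(w,w)|$), bounding it by $\|T\|_g$, and the way $\|g'(\tau)\|_{g(\tau)}$ is subsequently used in Lemma \ref{prop1} is precisely as the operator norm. If, as is standard in \cite{CM3}, the norm $\|\Hess_{b^2}-\frac{\Delta b^2}{n}g\|_g$ appearing in Theorem \ref{loj} is the Hilbert--Schmidt norm, then the left- and right-hand sides of the lemma carry different conventions and the inequality can be strict (operator norm $\le$ Hilbert--Schmidt norm for symmetric $2$-tensors). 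Your proof establishes the lemma in either interpretation, but the ``equality'' statement should be read as conditional on matching conventions.
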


\vskip 1pc

\begin{proof}
Let $v \in T_x M$, $v \neq 0$. Then using Lemma \ref{g'(t)} we compute
\begin{align*}
\frac{g'(t)(v,v)_x}{g(t)(v,v)_x} &= \frac{2\Phi_t^*\left(b^{-2} \left(\Hess_{b^2} -\frac{\Delta b^2}{n} g\right)\right)_{x}(v,v)}{(b\circ \Phi_t(x))^{-2}(\Phi_t^*g)_{x}(v,v)} \\
&=\frac{2\Phi_t^*\left(\Hess_{b^2} -\frac{\Delta b^2}{n} g\right)_{x}(v,v)}{(\Phi_t^*g)_{x}(v,v)} \\
&=2\left(\Hess_{b^2} -\frac{\Delta b^2}{n} g\right)_{\Phi_t(x)} \left(\frac{d(\Phi_t)_{x}v}{|d(\Phi_t)_{x} v|_g}, \frac{d(\Phi_t)_{x} v}{|d(\Phi_t)_{x} v|_g} \right).
\end{align*}
Since $\ds{\left|  \frac{d(\Phi_t)_{x} v}{|d(\Phi_t)_{x} v|_g}  \right|_g = 1}$, we have that
\begin{align*}
\left| \frac{g'(t)_{x}(v,v)}{g(t)_{x}(v,v)} \right|^2 &\leq 4 \left\{ \sup_{\underset{|w|=1}{w \in T_{\Phi_t(x)} M}}\left| \left(\Hess_{b^2} -\frac{\Delta b^2}{n} g\right)(w,w) \right| \right\}^2 \\
&\leq 4\left\|\Hess_{b^2} -\frac{\Delta b^2}{n} g\right\|^2(\Phi_t(x)).
\end{align*}
This proves the lemma.
\end{proof}

\vskip 1pc

\begin{lem} \label{prop1}
For any $0<s<t$, we have
\begin{align}
\fint_{b=r}\left\{ \sup_{v \neq 0}  \left|\log \frac{g(t)(v,v)}{g(s)(v,v)} \right| \right\}d\sm \leq \sqrt{t-s}\left(\fint_{b=r} \int_s^t \|g'(\tau)\|^2_{g(\tau)} d\tau \, d\sm\right)^{1/2}.
\end{align}
\end{lem}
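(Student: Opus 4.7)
The plan is to deduce this pointwise-in-$x$ by applying the fundamental theorem of calculus in $\tau$ to $\log g(\tau)(v,v)$, and then to turn the resulting expression into the claimed average integral by two applications of the Cauchy–Schwarz / Jensen inequality.

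First I would fix $x \in M\setminus\{p\}$ and a nonzero $v \in T_x M$. Since $g(\tau)(v,v)>0$ and smooth in $\tau$, the fundamental theorem of calculus gives
\begin{equation}
\log \frac{g(t)(v,v)}{g(s)(v,v)} = \int_s^t \frac{g'(\tau)(v,v)}{g(\tau)(v,v)}\, d\tau. \nonumber
\end{equation}
Taking absolute values, bringing them inside the integral, and then taking a supremum over $v \neq 0$, I would use $\sup_v \int \leq \int \sup_v$ together with the fact that the pointwise operator norm
\begin{equation}
\|g'(\tau)\|_{g(\tau)}(x) \;=\; \sup_{v \neq 0}\left| \frac{g'(\tau)(v,v)}{g(\tau)(v,v)} \right| \nonumber
\end{equation}
(which is the norm that already appeared in Lemma \ref{g'(t)norm}) controls the integrand. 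This yields the pointwise bound
\begin{equation}
\sup_{v\neq 0} \left|\log \frac{g(t)(v,v)}{g(s)(v,v)}\right|(x) \;\leq\; \int_s^t \|g'(\tau)\|_{g(\tau)}(x)\, d\tau. \nonumber
\end{equation}

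Next I would apply Cauchy–Schwarz in the $\tau$ variable to the right-hand side, producing
\begin{equation}
\int_s^t \|g'(\tau)\|_{g(\tau)}(x)\, d\tau \;\leq\; \sqrt{t-s}\left(\int_s^t \|g'(\tau)\|^2_{g(\tau)}(x)\, d\tau\right)^{1/2}. \nonumber
\end{equation}
Finally, I would integrate in $x$ over the level set $\{b=r\}$ and normalize. Pulling out the $\sqrt{t-s}$ and invoking Jensen's inequality for the concave function $y \mapsto \sqrt{y}$ against the probability measure $\frac{d\sm}{\mathcal{H}^{n-1}(\{b=r\})}$ allows us to swap the average integral and the square root, giving exactly the claimed estimate.

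I do not expect any real obstacle here; the argument is a clean two-step Cauchy–Schwarz / Jensen chain after the FTC identity. The only mildly delicate point is the interchange $\sup_v \int_s^t \leq \int_s^t \sup_v$ and making sure the operator-norm definition of $\|g'(\tau)\|_{g(\tau)}$ is consistent with the one used in Lemma \ref{g'(t)norm}, but both are immediate.
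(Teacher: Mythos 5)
Your proof is correct and follows essentially the same route as the paper: FTC in $\tau$, bound the integrand by the operator norm $\|g'(\tau)\|_{g(\tau)}$ after swapping $\sup_v$ and $\int_s^t$, then Cauchy--Schwarz. The only cosmetic difference is that you apply Cauchy--Schwarz in $\tau$ pointwise and then Jensen over $\{b=r\}$, while the paper does a single Cauchy--Schwarz on the joint measure $d\tau\,d\sm$; these are equivalent, and you are in fact slightly more careful in writing $\sup_v \int \leq \int \sup_v$ as an inequality where the paper writes an equality.
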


\vskip 1pc

\begin{proof}
First note that, for any $x\in \{b=r\}$ and $v \in T_x M$, $v \neq 0$, we have
\begin{equation*}
\left(\log \frac{g(t)(v,v)}{g(s)(v,v)}\right)(x)= \int_s^t \frac{g'(\tau)_{x}(v,v)}{g(\tau)_{x}(v,v)} \, d\tau.
\end{equation*}
Therefore we have
\begin{align*}
\fint_{b=r} \sup_{v \neq 0} \left|\log \frac{g(t)(v,v)}{g(s)(v,v)} \right| d\sm &\leq \fint_{b=r} \sup_{v\neq 0} \int_s^t \left| \frac{g'(\tau)(v,v)}{g(\tau)(v,v)}\right| \, d\tau \, d\sm\\
&=\frac{1}{\mathcal{H}^{n-1} (\{b=r\})} \int_{b=r} \int_s^t  \left( \|g'(\tau)\|_{g(\tau)}^{2} \right)^{\frac{1}{2}} \, d\tau \, d\sm \\
&\leq \frac{\sqrt{t-s}}{\mathcal{H}^{n-1} (\{b=r\})^{1/2}}\left( \int_{b=r} \int_s^t  \|g'(\tau)\|_{g(\tau)}^2 \, d\tau \, d\sm\right)^{1/2},
\end{align*}
where we used the Cauchy-Schwarz inequality for the last step. This proves the lemma.
\end{proof}

\vskip 1pc

In the next proposition we will bound the $L^2$-norm of $g'(t)$ by a weighted $L^2$-norm of the
trace-free Hessian of $b^2$.

\vskip 1pc

\begin{prop} \label{L2-of-g'(t)}
There exists $r_0 = r_0(M,g)$ such that, for all $r>r_0$, the following is true. Let $F : \{b>r_0\} \to M$ be the map that sends a point in $\{b>r_0\}$ to the unique point in the same flow line that belongs to $\{b=r\}$. Then we have that
\begin{align} \label{intermediate}
\fint_{b=r} \int_s^t \|g'(\tau)\|^2_{g(\tau)} \, d\tau \, d\sigma \nonumber \\
	& \hspace{-4cm} \leq  \frac{2r^{n-1}}{\mathcal{H}^{n-1} (\{b=r\})} \int_{\underset{s \leq \tau \leq t}{\cup} \Phi_\tau(\{b=r\})} \frac{b^{-n}}{|\nab b| \circ F} \left\|\Hess_{b^2}^g - \frac{\Delta b^2}{n} g\right\|^2.
\end{align}	
\end{prop}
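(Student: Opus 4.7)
The plan is to combine Lemma \ref{g'(t)norm}, which bounds $\|g'(\tau)\|^2_{g(\tau)}(x) \leq 4 \|\Hess_{b^2} - \frac{\Delta b^2}{n} g\|^2(\Phi_\tau(x))$, with a change of variables from the parameter space $\{b=r\} \times [s,t]$ to the flow tube $\Omega = \bigcup_{\tau \in [s,t]} \Phi_\tau(\{b=r\})$ using the map $\Psi(x,\tau) = \Phi_\tau(x)$. The right-hand side of \eqref{intermediate} should emerge once the Jacobian of $\Psi$ is identified and re-expressed in terms of $y = \Psi(x,\tau)$ via $x = F(y)$.

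I would first compute $|\det d\Psi|$ pointwise. At $(x,\tau)$ with $y = \Phi_\tau(x)$, one has $d\Psi(\partial_\tau) = \nab b^2 = 2 b |\nab b| \hat{n}$, where $\hat{n} = \nab b / |\nab b|$ is both parallel to the flow direction and normal to the level set $\{b = b(y)\}$. Wedging $d\Phi_\tau(e_1), \ldots, d\Phi_\tau(e_{n-1})$ with $\hat{n}$ kills the $\hat{n}$-components of the $d\Phi_\tau(e_i)$ and keeps only their tangential parts, which are exactly $d\Pi_{b(y)}(e_i)$, where $\Pi_\rho : \{b=r\} \to \{b=\rho\}$ denotes the map sending $x$ along its flow line to $\{b=\rho\}$. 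This gives $|\det d\Psi|(x,\tau) = 2 b(y) |\nab b|(y) \cdot |\det d\Pi_{b(y)}|(x)$.

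Next I would identify $|\det d\Pi_\rho|$ via a conservation law. Since $G = b^{2-n}$ is harmonic on $M \setminus \{p\}$, the vector field $b^{1-n}\nab b$ is divergence-free. Applying the divergence theorem to a thin flow tube between a small region $A \subset \{b=r\}$ and $\Pi_\rho(A) \subset \{b=\rho\}$ kills the side contribution (since flow lines are parallel to $\nab b$) and equates the two caps:
\begin{equation*}
r^{1-n} \int_A |\nab b| \, d\sigma_r = \rho^{1-n} \int_A |\nab b|(\Pi_\rho(x)) \, |\det d\Pi_\rho|(x) \, d\sigma_r(x).
\end{equation*}
Localizing yields $|\det d\Pi_\rho|(x) = \rho^{n-1} |\nab b|(x) / (r^{n-1} |\nab b|(\Pi_\rho(x)))$, and combining with the previous step gives $|\det d\Psi|(x,\tau) = 2 b(y)^n |\nab b|(F(y)) / r^{n-1}$.

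Finally, the change of variables $y = \Psi(x,\tau)$ converts $\int_{b=r}\int_s^t \|\Hess_{b^2} - \frac{\Delta b^2}{n} g\|^2(\Phi_\tau(x)) \, d\tau \, d\sigma$ into $\int_\Omega \|\Hess_{b^2} - \frac{\Delta b^2}{n} g\|^2 \cdot r^{n-1} / (2 b^n |\nab b|(F)) \, dV$, and combining with the factor $4$ from Lemma \ref{g'(t)norm} and dividing by $\mathcal{H}^{n-1}(\{b=r\})$ produces the claim. The threshold $r_0$ is chosen so that $|\nab b| > 0$ on $\{b > r_0\}$, ensuring that the level sets are smooth and $\Phi_\tau$ is a diffeomorphism onto its image and $F$ is well-defined; this follows from the asymptotic regularity of $b$ under the smooth-cross-section hypothesis. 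The main obstacle is the Jacobian computation, and the key insight is that $b^{1-n}\nab b$ is divergence-free, which reduces an otherwise delicate calculation to an elementary flux identity.
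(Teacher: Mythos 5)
Your proof is correct, and it takes a genuinely different route to the Jacobian than the paper does, though both hinge on harmonicity of $G$. The paper never writes down $|\det d\Psi|$ directly; instead, it first invokes the coarea formula at the base point $x$ to relate $d\tau\,d\sigma_x$ to $d\vol_x$, then propagates the volume form along the flow line by solving the ODE $\frac{d}{d\tau}\bigl(\Phi_\tau^*d\vol/d\vol\bigr) = \Delta b^2\cdot\bigl(\Phi_\tau^*d\vol/d\vol\bigr)$ together with the ODE $\frac{d}{d\tau}\log b = 2|\nabla b|^2$; dividing the two exponentials (using $\Delta b^2 = 2n|\nabla b|^2$) produces $d\vol_x/d\vol_{\Phi_\tau(x)} = (b(x)/b(\Phi_\tau(x)))^n$. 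Your approach replaces the pair of ODE integrations with a single flux identity: you factor the Jacobian as a normal stretch $2b|\nabla b|$ times the tangential determinant $|\det d\Pi_\rho|$, and then pin down $|\det d\Pi_\rho|$ by applying the divergence theorem to the vector field $b^{1-n}\nabla b = \tfrac{1}{2-n}\nabla G$ on a thin flow tube. This is an equivalent encoding of $\Delta G = 0$ and gives the same final Jacobian $2b^n|\nabla b|(F)/r^{n-1}$, but it is arguably more geometric and bypasses the explicit exponential cancellation. One small point worth making explicit in your argument is that the tangential projection of $d\Phi_\tau(e_i)$ agrees with $d\Pi_{b(y)}(e_i)$: the two maps $\Phi_\tau$ and $\Pi_\rho$ differ by an $x$-dependent time reparametrization, but that reparametrization only contributes a term proportional to $\nabla b^2$, which is killed when you project onto $T_y\{b=b(y)\}$ and wedge with $\hat n$. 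Your treatment of $r_0$ is briefer than the paper's (which cites the integral gradient estimate of Colding--Minicozzi and elliptic regularity to get $|\nabla b| > 0$ for $b$ large), but you point to the same underlying fact.
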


\vskip 1pc

\begin{proof}
We will first derive inequality (\ref{intermediate}) assuming that the set $\{b \geq r_0\}$ does not contain any critical point of $b$ for large $r_0$. This assumption will be removed at the end of the proof. Let $r>r_0$. We are going to consider $\Phi: \{b=r\} \times (s,t) \to M $, $\Phi(x,\tau ) = \Phi_\tau (x)$ to be a parametrization of the open set $\underset{s < \tau < t}{\cup} \Phi_\tau(\{b=r\})$, and $d\tau \, d\sm$ as a top-degree form on this open set.  Note that $d\sm$ is the $(n-1)$-dimensional volume form on $\{b=r\}$ with respect to $g$, so that if $x_1, \cdots, x_{n-1}$ are coordinates on $\{b=r\}$ then $\ds{d\sm = \sqrt{\det g|_{\{b=r\}}} dx_1 \cdots dx_{n-1}}$. In particular, $\ds{d\tau \, d\sm = \sqrt{\det g|_{\{b=r\}}} d\tau\, dx_1 \cdots dx_{n-1}}$. So if $x \in \{b=r\}$, then $d\tau \, d\sm_x$ can define a top-degree form at $\Phi_{\tau} (x)$ for any value of $\tau$.

Let $x$ denote a point in $\{b=r\}$. Then by Lemma \ref{g'(t)norm}, we have 
\begin{align} \label{average}
\fint_{b=r} \int_s^t \|g'(\tau)\|^2_{g(\tau)}(x) \, d\tau \, d\sigma_{x} \nonumber \\
	&\hspace{-5cm} \leq \frac{4}{\mathcal{H}^{n-1} (\{b=r\})} \int_{b=r} \int_s^t \left\|\Hess_{b^2}^g - \frac{\Delta b^2}{n} g\right\|^2_g(\Phi_{\tau}(x)) \, d\tau \, d\sigma_x  \nonumber \\
& \hspace{-5cm}=\frac{4}{\mathcal{H}^{n-1} (\{b=r\})} \int_{\underset{s < \tau < t}{\cup} \Phi_\tau(\{b=r\})} \left\|\Hess_{b^2}^g - \frac{\Delta b^2}{n} g\right\|^2_g(\Phi_{\tau}(x)) \, \frac{d\tau \, d\sigma_x}{d\vol_{\Phi_{\tau}(x)}} \,d\vol_{\Phi_{\tau}(x)}.
\end{align}

We have to compare the form $d\tau\, d\sm_x$ to the actual volume form $d\vol_{\Phi_\tau(x)}$ at $\Phi_{\tau}(x)$, $\tau \in (s,t)$. Recall that the Laplacian is the change of the volume element along the flow line, i.e.,
\begin{equation}
\frac{d}{d\tau} \left( \frac{\Phi^*_\tau (d\vol)}{d\vol}\right)_x=\Delta(b^2)(\Phi_\tau(x)) \left( \frac{\Phi^*_\tau (d\vol)}{d\vol}\right)_x.
\end{equation}
By the previous observation we can interpret $d\tau \, d\sm_x$ as a top-degree form either at $\Phi_{\tau}(x)$ or at $x$, and calculate that

\begin{align} \label{vol_change}
\frac{d\tau d\sm_x}{d\vol_{\Phi_\tau(x)}} &= \frac{d\tau \, d\sm_x}{d\vol_x} \frac{d\vol_x}{d\vol_{\Phi_\tau(x)}} \nonumber \\
&=\frac{d\tau \, d\sm_x}{\frac{1}{|\nab b^2|}\, db^2 \, d\sm_x}\cdot \exp\left(-\int_0^\tau \Delta b^2 (\Phi_u(x)) \, du \right) \nonumber \\
&=|\nab b^2|(x) \cdot \frac{1}{\mathcal{L}_{\nab b^2} b^2} \cdot  \exp\left(-\int_0^\tau 2n|\nab b|^2 (\Phi_u(x)) \, du \right) \nonumber \\
&=|\nab b^2|(x) \cdot \frac{1}{|\nab b^2|^2(x)} \cdot \exp\left(-\int_0^\tau 2n|\nab b|^2 (\Phi_u(x)) \, du \right) \nonumber \\
&=\frac{1}{|\nab b^2|(x)} \cdot \exp\left(-\int_0^\tau 2n|\nab b|^2 (\Phi_u(x)) \, du \right).
\end{align}

On the other hand, note that the change of $\log b$ along a flow line is given by
\begin{align}
\frac{d}{d\tau} \log b = \mathcal{L}_{\nab b^2} \log b= g\left(\nab b^2, \frac{\nab b}{b}\right) = 2|\nab b|^2.
\end{align}
Hence, the change of $b$ along the flow line is 
\begin{equation} \label{change_of_b}
b(\Phi_\tau(x)) = b(x) \cdot \exp \left(\int_0^\tau 2|\nab b|^2(\Phi_u(x)) \, du \right).
\end{equation}
Combining with (\ref{vol_change}), we have that 
\begin{align} \label{volc}
\frac{d\tau d\sm_x}{d\vol_{\Phi_\tau(x)}} &=\frac{1}{|\nab b^2|(x)} \cdot \left(\frac{b(\Phi_\tau(x))}{b(x)}\right)^{-n}=\frac{r^{n-1}}{2|\nab b|(x)}b(\Phi_\tau(x))^{-n}.
\end{align}
Substituting (\ref{volc}) into (\ref{average}) yields
\begin{align}
\fint_{b=r} \int_s^t \|g'(\tau)\|^2_{g(\tau)}(x) \, d\tau \, d\sigma_{x} \nonumber \\
&\hspace{-4cm} \leq \frac{2r^{n-1}}{\mathcal{H}^{n-1} (\{b=r\})} \int_{\underset{s \leq \tau \leq t}{\cup} \Phi_\tau(\{b=r\})} \frac{1}{|\nab b|(x)} b^{-n}\left\|\Hess_{b^2}^g - \frac{\Delta b^2}{n} g\right\|^2,
\end{align}
which is the inequality that we wanted to prove.

Finally we argue as promised that if $r$ is large then $\{b \geq r\}$ does not contain any critical point of $b$. This argument is contained in \cite{CM3} but we repeat it for the convenience of the reader. We denote by $\Psi(r)$ a positive function of one variable such that $\Psi(r) \to 0$ as $r\to\infty$. $\Psi(r)$ may change from line to line.

 Let $C(N)$ be a smooth tangent cone of $M$, which is a metric cone by \cite{CC2}. Denote by $o$ the vertex of $C(N)$. Let $A>1$ be a fixed constant. Then there exists a sequence $r_i \to \infty$ such that
\be
d_{GH} \left( B_p(Ar_i) , B^{C(N)}_o(Ar_i)  \right) < r_i^{2} \Psi(r_i),
\ee
where $\Psi(r) \to 0$ as $r\to \infty$. By \cite{C2}, this convergence in the Gromov-Hausdorff topology is in fact a convergence in the $\mathcal{C}^\infty$ topology since $M$ is Einstein.

On the other hand, by the Bishop-Gromov volume comparison theorem, the rate of volume growth $\ds{\frac{\vol (B_p(r))}{\om_n r^{n}}}$ is monotone non-increasing. Let $\ds{V_M = \lim_{r \to \infty} \frac{\vol (B_p(r))}{\om_n r^{n}}}$ and define a constant $b_\infty$ by
\be 
b_\infty = \left(\frac{V_M}{\om_n}\right)^{\frac{1}{n-2}}.
\ee

In \cite{CM4} the following integral gradient estimate of $b$ was obtained,

\be
\fint_{B_p(Ar) } \left| |\nab b|^2-b_\infty^2 \right|^2 \leq \Psi(r).
\ee
Since $b$ satisfies an elliptic equation, the integral gradient estimate implies pointwise gradient bounds
\begin{equation} \label{grad_bound_seq}
\sup_{B_p(Ar_i) \backslash B_p(r_i / A)} \left||\nab b|-b_\infty\right| \leq \Psi(r_i).
\end{equation}
In particular, if $\ds{\frac{r_i}{A} \leq b}$ for large $r_i$ then $|\nab b| \neq 0$. This completes the proof.
\end{proof}

\vskip 1pc

Combining Lemma \ref{prop1} and Proposition \ref{L2-of-g'(t)} gives the following proposition.

\vskip 1pc

\begin{prop} \label{bound-by-hess}
If $r>r_0$, then for any $0<s<t$,

\begin{align}
\fint_{b=r} \sup_{v \neq 0} \left|\log \frac{g(t)(v,v)}{g(s)(v,v)} \right| d\sm_{x} \nonumber \\
&\hspace{-4cm}  \leq \sqrt{t-s}\left( \frac{2r^{n-1}}{\mathcal{H}^{n-1} (\{b=r\})} \int_{\underset{s \leq \tau \leq t}{\cup} \Phi_\tau(\{b=r\})} \frac{1}{|\nab b|(x)} b^{-n}\left\|\Hess_{b^2}^g - \frac{\Delta b^2}{n} g\right\|^2\right)^{1/2}.
\end{align}
\end{prop}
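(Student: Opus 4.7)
The plan is to derive Proposition \ref{bound-by-hess} as an immediate corollary by chaining the two preceding results. First I would fix $r>r_0$, with $r_0$ as in Proposition \ref{L2-of-g'(t)}, so that the weighted Hessian estimate is available. Lemma \ref{prop1} then gives
\begin{equation*}
\fint_{b=r} \sup_{v \neq 0} \left|\log \frac{g(t)(v,v)}{g(s)(v,v)} \right| d\sm \leq \sqrt{t-s}\left(\fint_{b=r} \int_s^t \|g'(\tau)\|^2_{g(\tau)} \, d\tau \, d\sm\right)^{1/2},
\end{equation*}
so the only remaining task is to bound the inner parenthesised quantity.

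Next I would substitute the estimate from Proposition \ref{L2-of-g'(t)} directly under the square root. That inequality is already of the form
\begin{equation*}
\fint_{b=r} \int_s^t \|g'(\tau)\|^2_{g(\tau)}\, d\tau \, d\sm \leq \frac{2r^{n-1}}{\mathcal{H}^{n-1}(\{b=r\})} \int_{\underset{s \leq \tau \leq t}{\cup}\Phi_\tau(\{b=r\})} \frac{b^{-n}}{|\nab b|\circ F}\left\|\Hess_{b^2}^g-\frac{\Delta b^2}{n}g\right\|^2,
\end{equation*}
which matches exactly the right-hand side of Proposition \ref{bound-by-hess} under the square root, after identifying the weight $|\nab b|(x)^{-1}$ with $(|\nab b|\circ F)^{-1}$; here, as in Proposition \ref{L2-of-g'(t)}, $x$ denotes the unique point on the same flow line as the integration point that lies in $\{b=r\}$. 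Plugging in and taking the square root yields the desired estimate.

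Since both inputs are already in hand, there is no substantive obstacle here. All the real work, namely the Cauchy--Schwarz step in $\tau$ carried out in Lemma \ref{prop1}, and the change-of-variables formula for the flow of $\nab b^2$ together with the removal of critical points of $b$ at infinity via the smooth cross section hypothesis in Proposition \ref{L2-of-g'(t)}, has been done. The only care needed is the notational consistency of the flow-line identification map $F$ and the propagation of the hypothesis $r>r_0$ from Proposition \ref{L2-of-g'(t)}.
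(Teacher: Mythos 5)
Your proposal is correct and matches the paper exactly: the paper itself states ``Combining Lemma \ref{prop1} and Proposition \ref{L2-of-g'(t)} gives the following proposition,'' which is precisely the chaining you carry out. Your note identifying the weight $|\nab b|(x)^{-1}$ with $(|\nab b|\circ F)^{-1}$ is also the correct reading of the notation.
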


\vskip 1pc

\section{Proof of Theorem \ref{mainthm2}} \label{section3}

\vskip 1pc

We begin this section with a lemma on controlling the area of level sets of $b$.

\vskip 1pc

\begin{lem} \label{area}
Suppose that $M$ has a smooth tangent cone $C(Y)$ at infinity. Then for any $\ep>0$, if $r$ is sufficiently large, then 
\begin{equation} \label{gradest_p}
\left| |\nab b| - b_\infty \right| < \ep \, \, \mathrm{when} \, \, r \leq b.
\end{equation}
Moreover, there is a constant $C=C(M,g)>0$ such that the area of the hypersurface $\{b=r\}$ is bounded above and below,
\begin{equation}
\frac{r^{n-1}}{C} \leq \mathrm{Area}(\{b=r\}) \leq Cr^{n-1}.
\end{equation}
\end{lem}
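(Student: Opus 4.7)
The plan is to prove the two statements separately, with the gradient estimate coming first and then feeding into the area estimate via a flux computation.

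For the pointwise gradient estimate \eqref{gradest_p}, I would upgrade the ``sequence'' statement \eqref{grad_bound_seq} that already appears in the proof of Proposition \ref{L2-of-g'(t)} to a statement that holds for all large $r$. The hypothesis of this lemma is that $M$ has a smooth tangent cone $C(Y)$, so by Theorem \ref{CM_unique} this tangent cone is unique. Uniqueness means that for every $A>1$ and $\ep'>0$, \emph{every} sequence $r_i \to \infty$ satisfies $d_{GH}(B_p(Ar_i), B^{C(Y)}_o(Ar_i)) < r_i^2 \ep'$ for all sufficiently large $i$; equivalently, the scaled annuli $A^{-1} \leq b/r \leq A$ converge to the corresponding annulus in $C(Y)$ in the pointed Gromov--Hausdorff sense for all $r \to \infty$. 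Since $M$ is Einstein (in fact Ricci-flat), Cheeger's $\ep$-regularity \cite{C2} upgrades this to $\mathcal{C}^\infty$ convergence. Now apply the $L^2$ gradient estimate of Colding--Minicozzi from \cite{CM4} together with elliptic regularity for $b$ exactly as in the proof of Proposition \ref{L2-of-g'(t)}, but along any sequence $r \to \infty$. This yields \eqref{gradest_p}.

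For the area bound, I would run a flux argument using the divergence theorem applied to $\nabla G$ on the domain $\{b \leq r\} \setminus B_\dt(p)$ and sending $\dt \to 0$. Since $G$ is harmonic away from $p$ and is normalized by $\Delta G = -n(n-2)\om_n \dt_p$, the boundary flux at $\{b=r\}$ equals $-n(n-2)\om_n$, independently of $r$. Using $G = b^{2-n}$, so that $\nab G = -(n-2) b^{1-n} \nab b$, and noting that the outward unit normal on $\{b=r\}$ is $\nab b/|\nab b|$, this identity becomes
\begin{equation}
\int_{\{b=r\}} |\nab b| \, d\sm = n\om_n r^{n-1}.
\end{equation}
Combining this identity with the pointwise bound $||\nab b| - b_\infty| < \ep$ from part one, for $r$ large enough we have $b_\infty/2 \leq |\nab b| \leq 2 b_\infty$ on $\{b=r\}$, which immediately yields
\begin{equation}
\frac{n\om_n}{2b_\infty} r^{n-1} \leq \mathrm{Area}(\{b=r\}) \leq \frac{2n\om_n}{b_\infty} r^{n-1},
\end{equation}
as desired.

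The only subtle point I would anticipate is the first one, namely replacing ``along a sequence'' by ``for all sufficiently large $r$'' in the gradient estimate. This is where the assumption of smooth cross section and the appeal to Theorem \ref{CM_unique} is essential: without uniqueness one cannot exclude the possibility that the convergence fails on some sequence of bad scales, and the pointwise gradient control could degenerate there. Once the gradient control is established at every large scale, the second part is a clean application of the divergence theorem and requires no further geometric input beyond the normalization of $G$ and the identity $G=b^{2-n}$.
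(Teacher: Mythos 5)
Your proof is correct, and for the area estimate you take a genuinely different (and somewhat more self-contained) route than the paper. For the gradient estimate you argue essentially as the paper does: the paper invokes equation \eqref{grad_bound_seq} together with uniqueness of the tangent cone from Theorem \ref{CM_unique}, and you spell out more carefully why uniqueness lets one upgrade the along-a-sequence estimate to an estimate for all large scales. For the area bound, however, the paper cites Colding's monotonicity results from \cite{C} (Corollary 2.19, Theorem 2.12): the scale-invariant quantity $A(r) = r^{1-n}\int_{\{b=r\}} |\nabla b|^3 \, d\sigma$ is monotone non-increasing with finite limit at $0$ and positive limit at $\infty$, so $A(r)$ is pinched between two positive constants, and the pointwise gradient control then converts $A(r)$ into $r^{1-n}\mathrm{Area}(\{b=r\})$ up to constants. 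You instead use a direct flux computation: applying the divergence theorem to $\nabla G$ on $\{\delta \leq b \leq r\}$ and letting $\delta \to 0$ gives the exact identity $\int_{\{b=r\}} |\nabla b| \, d\sigma = n\omega_n r^{n-1}$ for every regular value $r$, after which the gradient pinch again converts the weighted area into the unweighted area. Both arguments are valid and both feed the gradient estimate into a weighted-area identity; the difference is that the paper's quantity has a cubic weight $|\nabla b|^3$ and its boundedness rests on Colding's monotonicity formula, whereas your quantity has a linear weight $|\nabla b|$ and its value is an exact constant following only from the normalization of the Green function and harmonicity away from the pole. Your route is arguably more elementary since it avoids quoting the monotonicity machinery, though it needs the small check that $\{b \leq r\}$ is compact (which follows from Theorem \ref{MSY}) and that large $r$ are regular values of $b$ (which follows from the gradient estimate you have just proved), both of which you implicitly use.
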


\vskip 1pc

\begin{proof}
The first assertion was already proved in \cite{CM3}, as we mentioned in Section 2 (see equation (\ref{grad_bound_seq})). For the second assertion we will utilize the following facts from \cite{C} (Corollary 2.19, Theorem 2.12): define a function $A(r)$ by
\begin{equation} \label{def_A}
A(r) = r^{1-n}\int_{b=r} |\nab b|^3 d\sm.
\end{equation}
Then $A$ is monotone non-increasing in $r$. Moreover, 
\begin{equation}\ds{\lim_{r \to 0} A(r)<\infty},
\end{equation} 
and 
\begin{equation}
\ds{0<\lim_{r\to \infty} A(r)}.
\end{equation} 
Now by (\ref{gradest_p}), if $r$ is large then there exists $C>0$ so that 
\begin{equation} \label{A_and_area}
\frac{A(r)}{C} \leq \mathrm{Area}(\{b=r\}) \leq CA(r).
\end{equation}
The second assertion in the lemma now follows from (\ref{def_A})--(\ref{A_and_area}) and (\ref{gradest_p}).
\end{proof}

\vskip 1pc

We will make use of the following (corollary of the) \L ojasiewicz inequality of Colding-Minicozzi to prove Theorem \ref{mainthm2}.

\vskip 1pc

\begin{thm}[\cite{CM3}, Proposition 2.25] \label{loj}
Suppose that one tangent cone $C(N)$ at infinity of $M$ is smooth. Then there is a constant $C=C(\ep)>0$ such that the following is true: if $d_{GH}(B_{2R}(p) \backslash B_R(p), B^{C(N)}_{2R}(0) \backslash B^{C(N)}_R(0)) \leq \ep R$ for all $R\in [\frac{s}{100}, 100r]$, then
\begin{equation}
\int_{b\geq r} 	b^{-n} \left\|\Hess_{b^2}^g - \frac{\Delta b^2}{n}g\right\|^2_g  	\leq \frac{C}{\log(r/s)^{1+\bt}}.
\end{equation}
Here, $\bt>0$ is a constant depending on $(M^n,g)$, but not on $\ep$.
\end{thm}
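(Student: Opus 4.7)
The plan is to combine Colding's monotonicity formula for the area-type functional $A(r)$ from Lemma \ref{area} with an infinite-dimensional {\L}ojasiewicz--Simon gradient inequality on the space of metrics near the link of the smooth tangent cone. The first step is to exploit monotonicity to express the integral on the left-hand side as a difference of boundary values. Starting from the Bochner formula applied to $b^2$ and integrating by parts on $\{b=r\}$ (this is the computation behind the monotonicity of $A$ already cited in Lemma \ref{area}), one should obtain an identity of the shape
\begin{equation*}
-\frac{d}{dr}A(r)=\frac{c_n}{r^n}\int_{b=r}\frac{1}{|\nab b|}\left\|\Hess_{b^2}-\frac{\Delta b^2}{n}g\right\|^2 d\sm,
\end{equation*}
which, combined with the coarea formula $d\vol=|\nab b|^{-1}\,d\sm\,db$, integrates to
\begin{equation*}
\int_{b\geq r} b^{-n}\left\|\Hess_{b^2}-\frac{\Delta b^2}{n}g\right\|^2 \leq C\bigl(A(r)-A(\infty)\bigr).
\end{equation*}
It therefore suffices to bound the rate at which $A(r)$ approaches its limit.

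Next I would view $A$ as the value, at logarithmic scale $\tau=\log r$, of a smooth functional on (gauge classes of) metrics on a fixed cross section, whose critical points are exactly metric cones and whose critical point corresponding to the tangent link $N$ is isolated modulo diffeomorphisms. Because $N$ is smooth and compact, the second variation of this functional is an elliptic, self-adjoint, Fredholm operator, so a Simon-type {\L}ojasiewicz inequality
\begin{equation*}
|A(r)-A(\infty)|^{1-\gamma}\leq C\,\|\mathrm{grad}\,A(r)\|_{L^2}
\end{equation*}
holds in a $\mathcal{C}^k$-neighborhood of $N$ for some $\gamma\in(0,1/2)$. The Gromov--Hausdorff closeness hypothesis on the annulus $[s/100,100r]$, upgraded to smooth convergence via Cheeger's regularity theorem for Einstein manifolds (as already used in Proposition \ref{L2-of-g'(t)}), ensures that the rescaled level set $\{b=R\}/R$ lies in this neighborhood for every $R$ in the relevant range, so the {\L}ojasiewicz inequality applies at every scale encountered.

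Combining the two ingredients, and noting that the right-hand side of the {\L}ojasiewicz inequality is essentially $\sqrt{-A'(r)\cdot r}$ up to universal constants (again a consequence of the Bochner identity and the coarea formula), one reaches the autonomous differential inequality
\begin{equation*}
-\frac{d}{d\tau}\bigl(A(e^\tau)-A(\infty)\bigr)\geq c\bigl(A(e^\tau)-A(\infty)\bigr)^{2-2\gamma}
\end{equation*}
in the logarithmic variable $\tau=\log r$. Elementary integration from $\log s$ to $\log r$ then yields
\begin{equation*}
A(r)-A(\infty)\leq \frac{C}{(\log(r/s))^{1+\bt}}, \qquad \bt=\frac{2\gamma}{1-2\gamma}>0,
\end{equation*}
which is exactly the claimed decay.

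\emph{Main obstacle.} The heart of the argument is the {\L}ojasiewicz--Simon inequality in the second step. Although it follows a classical template---Lyapunov--Schmidt reduction to the finite-dimensional kernel of the linearization, followed by an application of the finite-dimensional analytic {\L}ojasiewicz inequality---implementing it here demands identifying the correct functional whose gradient structure matches the monotonicity of $A(r)$, verifying its analyticity in a natural chart on the space of metrics, and handling the diffeomorphism gauge carefully. The smoothness hypothesis on $N$ is essential at precisely this point: without it, the linearized operator could fail to be Fredholm with closed range, and one could at best hope for logarithmic or slow polynomial decay in $r$ itself, which would be far too weak for the application in Theorem \ref{mainthm2}.
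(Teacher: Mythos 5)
The paper does not prove this statement at all: it is quoted verbatim as \cite{CM3}, Proposition 2.25, and used as a black box. So there is no ``paper's own proof'' to compare against, and you should not have tried to reconstruct one; the relevant question for the present paper is only whether the citation and the statement match, which they do. That said, a few remarks on your reconstruction of the Colding--Minicozzi argument.

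Your overall skeleton is the right one: use Colding's monotonicity of $A(r)=r^{1-n}\int_{b=r}|\nab b|^3$ together with the coarea formula to convert $\int_{b\geq r} b^{-n}\|\Hess_{b^2}-\frac{\Delta b^2}{n}g\|^2$ into $A(r)-A(\infty)$ up to constants, then feed an infinite-dimensional {\L}ojasiewicz--Simon inequality into an ODE in $\tau=\log r$, and the exponent bookkeeping ($\bt = 2\gamma/(1-2\gamma)$) is consistent. But the entire content of \cite{CM3} is precisely the second step, and your treatment of it has a gap and a small internal inconsistency. The gap: asserting ``a Simon-type {\L}ojasiewicz inequality holds'' because the linearization is elliptic, self-adjoint and Fredholm is not enough. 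One must identify a concrete analytic functional $\mathcal{R}$ on a Banach manifold of cross-sectional metrics whose gradient flow matches the $b$-level-set flow and whose value tracks $A(e^\tau)$, verify that $\mathcal{R}$ is real-analytic in suitable charts, quotient out the diffeomorphism gauge without destroying analyticity, and then relate the abstract gradient norm back to the geometric quantity $-A'(r)\cdot r$; each of these is nontrivial and is where \cite{CM3} spends most of its effort. The inconsistency: you first claim the critical point is ``isolated modulo diffeomorphisms,'' but then invoke Lyapunov--Schmidt reduction to a finite-dimensional kernel, which is exactly the device needed when the critical set is \emph{not} isolated (integrability may fail); CM make no isolatedness assumption, and neither should you. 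Finally, note that the constant $C=C(\ep)$ in the statement absorbs the initial value $A(s)-A(\infty)$; your integration from $\log s$ silently uses that GH-closeness at scale $s$ bounds this initial value in terms of $\ep$, which deserves a sentence.

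None of this affects the paper under review, which legitimately cites the result rather than proving it.
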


\vskip 1pc

We will also utilize the following comparison result for $G$ from \cite{MSY}.

\vskip 1pc

\begin{thm}[\cite{MSY}, Subsection 1.2] \label{MSY}
Let $N$ be a complete Riemannian manifold with Euclidean volume growth and nonnegative Ricci curvature of dimension $n \geq 3$. Then there exist constants $C_1, C_2$ with the following effect. If $G$ is the minimal positive Green function with pole $p \in N$ and $r$ is the distance from $p$, then
\begin{equation}
C_1 r^{2-n} \leq G \leq C_2 r^{2-n}.
\end{equation}
\end{thm}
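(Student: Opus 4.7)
I would deduce Theorem \ref{MSY} from the Li-Yau heat kernel estimates by realizing the Green function as the time-integral of the heat kernel. Since $N$ has Euclidean volume growth, it is nonparabolic (Varopoulos \cite{V}), and the minimal positive Green function admits the representation
\begin{equation}
G(p,x) = c_n \int_0^\infty H(p,x,t)\, dt,
\end{equation}
where $H$ is the heat kernel and $c_n$ is a dimensional constant depending on the chosen normalization of $\Delta G$.

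The second step invokes the two-sided Li-Yau Gaussian bounds, which on a complete manifold with nonnegative Ricci curvature hold globally with uniform constants:
\begin{equation}
\frac{a_1}{\vol(B_{\sqrt{t}}(p))}\, e^{-A_1 r^2 / t} \;\leq\; H(p,x,t) \;\leq\; \frac{a_2}{\vol(B_{\sqrt{t}}(p))}\, e^{-A_2 r^2 / t},
\end{equation}
where $r = d(p,x)$. The Euclidean volume growth assumption gives the lower bound $v \rho^n \leq \vol(B_\rho(p))$ by hypothesis, while the Bishop-Gromov volume comparison gives the matching upper bound $\vol(B_\rho(p)) \leq \omega_n \rho^n$. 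Substituting these into the denominator of the Li-Yau estimate converts the volume factor into $t^{-n/2}$, up to constants depending only on $n$ and $v$.

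The third step is the change of variables $u = r^2 / t$ in
\begin{equation}
\int_0^\infty t^{-n/2}\, e^{-A r^2 / t}\, dt \;=\; r^{2-n} \int_0^\infty u^{n/2 - 2}\, e^{-A u}\, du,
\end{equation}
which is a finite multiple of $r^{2-n}$ because the exponent $n/2 - 2 > -1$ whenever $n \geq 3$, so the remaining integral is a convergent $\Gamma$-function value. Applying this to both the upper and the lower Li-Yau bounds yields constants $C_1$ and $C_2$ with $C_1 r^{2-n} \leq G(p,x) \leq C_2 r^{2-n}$, as required.

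The main subtlety is ensuring that the Li-Yau bounds are valid globally with constants independent of $x$ and $t$; this is precisely what the nonnegative Ricci curvature hypothesis guarantees, and it is the only place where the curvature assumption is used beyond Bishop-Gromov. An alternative, more streamlined route would use Grigoryan's characterization $G(p,x) \asymp \int_{r}^\infty s / \vol(B_s(p))\, ds$, valid for nonparabolic manifolds with volume doubling and a scale-invariant Poincar\'e inequality (both provided by $\Ric \geq 0$), and then substitute $\vol(B_s(p)) \asymp s^n$ to evaluate the integral directly; this avoids the heat kernel entirely.
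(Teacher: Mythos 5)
The paper does not prove Theorem \ref{MSY}; it is quoted verbatim from Mok--Siu--Yau (1981, Subsection 1.2) and used as a black box, so there is no in-paper argument for your proposal to be measured against. Judged on its own terms, your proof is correct. The representation $G(p,x) = c_n \int_0^\infty H(p,x,t)\,dt$ is valid precisely because $N$ is nonparabolic, the two-sided Li--Yau Gaussian bounds hold globally and uniformly under $\Ric \geq 0$, the pinching $v\rho^n \leq \vol(B_\rho(p)) \leq \omega_n \rho^n$ follows from the Euclidean growth hypothesis together with Bishop--Gromov, and the substitution $u = r^2/t$ converts the time integral into $r^{2-n}\,\Gamma(n/2-1)A^{-(n/2-1)}$, convergent since $n/2 - 2 > -1$ for $n \geq 3$. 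The only slight overstatement is the remark that nonnegative Ricci is used ``only'' for the Li--Yau bound beyond Bishop--Gromov: it also enters the nonparabolicity of $N$ via Varopoulos, which you do cite. It is worth noting that your route could not have been the one in the cited source: Mok--Siu--Yau (1981) predates the Li--Yau heat kernel bounds (1986), and their argument proceeds instead by elliptic comparison -- the Laplacian comparison theorem makes $r^{2-n}$ subharmonic off the pole, giving the upper bound on $G$ by the maximum principle, with the lower bound extracted from the non-collapsing volume hypothesis. Your alternative via Grigor'yan's integral characterization $G \asymp \int_r^\infty s/\vol(B_s(p))\,ds$ is also valid here and arguably the cleanest modern route, since volume doubling and the Poincar\'e inequality are both automatic under $\Ric \geq 0$. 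All three approaches yield the statement; yours simply trades the 1981 elliptic comparison machinery for later parabolic technology.
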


\vskip 1pc

Now we can finish the proof of Theorem \ref{mainthm2}.

\vskip 1pc

\begin{proof}[Proof of Theorem \ref{mainthm2}]
Fix a small number $\ep>0$. Since $C(N)$ is the unique tangent cone by Theorem \ref{CM_unique}, there exists $r_0$ so that if $r>r_0$ then
\begin{equation}
\left||\nab b| - b_\infty \right| < \ep \, \, \mathrm{when} \, \, r \leq b,
\end{equation}
and
\begin{equation}
	d_{GH}(B_{2R}(p) \backslash B_R(p), B^{C(N)}_{2R}(0) \backslash B^{C(N)}_R(0)) \leq \ep R \, \, \mathrm{for \, all} \, \, R \geq \frac{r}{100}. 
\end{equation}
Recall the following inequality from Proposition \ref{bound-by-hess}, that for $0<s<t$,

\begin{align}
\fint_{b=r} \sup_{v \neq 0} \left|\log \frac{g(t)(v,v)}{g(s)(v,v)} \right| d\sm \nonumber \\
&\hspace{-4cm} \leq \sqrt{t-s}\left( \frac{2r^{n-1}}{\mathcal{H}^{n-1} (\{b=r\})} \int_{\underset{s \leq \tau \leq t}{\cup} \Phi_\tau(\{b=r\})} \frac{1}{|\nab b|(x)} b^{-n}\left\|\Hess_{b^2}^g - \frac{\Delta b^2}{n} g\right\|^2\right)^{1/2}.
\end{align}
Since $(b_\infty - \ep) \leq |\nab b|$, it follows that there exists a positive constant $C$ depending only on $b_\infty$ so that

\begin{align}
\fint_{b=r} \sup_{v \neq 0} \left|\log \frac{g(t)(v,v)}{g(s)(v,v)} \right| d\sm  \nonumber \\
&\hspace{-3cm} \leq C \sqrt{t-s}\left( \frac{r^{n-1}}{\mathcal{H}^{n-1} (\{b=r\})} \int_{\underset{s \leq \tau \leq t}{\cup} \Phi_\tau(\{b=r\})}b^{-n}\left\|\Hess_{b^2}^g - \frac{\Delta b^2}{n} g\right\|^2\right)^{1/2}.
\end{align}

Next we look at the region of integration on the right hand side. From this point $C$ is allowed to change line by line, as long as it is independent of $r$, $s$, and $t$. Recall that by Lemma \ref{area}, we have
\begin{align}
\mathrm{Area}(\{b=r\}) \geq Cr^{n-1}.
\end{align}
Also note that for $x\in \{b=r\}$, by equation (\ref{change_of_b}), 
\begin{align}
	b(\Phi_\tau(x))&=b(x) \exp\left(\int_0^\tau 2 |\nab b|^2(\Phi_s(x)) \, ds\right) \nonumber  \\
	&\in \left[r\cdot\exp\left(2\tau (b_\infty -\ep)^2 \right), \, r\cdot\exp\left(2\tau (b_\infty+\ep)^2 \right)\right].
\end{align}
It follows that
\begin{align}
\underset{s \leq \tau \leq t}{\cup} \Phi_\tau(\{b=r\}) \subset \left\{r\cdot\exp\left(2s(b_\infty-\ep)^2 \right) \leq b \leq r\cdot\exp\left(2t(b_\infty+\ep)^2 \right)  \right\}.
\end{align}

Therefore, we have that
\begin{align}
\fint_{b=r} \sup_{v \neq 0} \left|\log \frac{g(t)(v,v)}{g(s)(v,v)} \right| d\sm \nonumber \\
&\hspace{-4cm} \leq C\sqrt{t-s}\left( \int_{re^{2s(b_\infty-\ep)^2} \leq b \leq r e^{2t(b_\infty+\ep)^2} } b^{-n}\left\|\Hess_{b^2}^g - \frac{\Delta b^2}{n} g\right\|^2\right)^{1/2} \nonumber \\
&\hspace{-4cm}\leq C\sqrt{t-s}\left( \int_{re^{2s(b_\infty-\ep)^2}\leq b} b^{-n}\left\|\Hess_{b^2}^g - \frac{\Delta b^2}{n} g\right\|^2\right)^{1/2}.
\end{align}
Combining with Theorem \ref{loj}, it follows that
\begin{align} \label{st}
\fint_{b=r} \sup_{v \neq 0} \left|\log \frac{g(t)(v,v)}{g(s)(v,v)} \right| d\sm \leq C\sqrt{t-s}\left(  \frac{1}{2s(b_\infty-\ep)^2+\log(r/r_0)}\right)^{\frac{1+\bt}{2}}.
\end{align}
Taking $t=As$ with $A>1$ in (\ref{st}) then switching $s$ and $t$, we obtain
\begin{align} 
\fint_{b=r} \sup_{v \neq 0} \left|\log \frac{g(At)(v,v)}{g(t)(v,v)} \right| d\sm
& \leq C\sqrt{(A-1)t} \cdot \left(  \frac{1}{2t(b_\infty-\ep)^2+\log(r/r_0)}\right)^{\frac{1+\bt}{2}} \nonumber \\
& \leq C\sqrt{(A-1)t} \cdot \left(  \frac{1}{2t(b_\infty-\ep)^2}\right)^{\frac{1+\bt}{2}} \nonumber \\
& \leq C\sqrt{A-1} \cdot t^{-\frac{\bt}{2}}.
\end{align}
Iterating for $t, At, A^2t, \cdots$, we have
\begin{align} 
\fint_{b=r} \sup_{v \neq 0} \left|\log \frac{g(A^n t)(v,v)}{g(t)(v,v)} \right| d\sm \leq C\sqrt{A-1}\frac{1-A^{-\frac{n\bt}{2}}}{1-A^{-\frac{\bt}{2}}} \cdot t^{-\frac{\bt}{2}} \leq \frac{C \sqrt{A-1}}{1-A^{-\frac{\bt}{2}}} t^{-\frac{\bt}{2}}.
\end{align}
Since this is true for any $n>0$ and $A>1$, we conclude that for any $T>t$, 
\begin{align}
\fint_{b=r} \sup_{v \neq 0} \left|\log \frac{g(T)(v,v)}{g(t)(v,v)} \right| d\sm \leq C t^{-\frac{\bt}{2}}.
\end{align}
 This finishes the proof of the theorem.
\end{proof}

\vspace{0.5cm}

{\bf Acknowledgement}. The author would like to thank Professor Tobias Holck Colding for numerous helpful discussions. The author was partially supported by NSF Grant DMS-1812142.

\vspace{1cm}

\vskip 1pc 

\noindent Department of Mathematics

\noindent Massachusetts Institute of Technology

\noindent 182 Memorial Drive, Cambridge, MA 02139

\noindent E-mail address: \url{jiewon@mit.edu}

\end{document}